\newtheorem{proposition}{Proposition}
\newtheorem{lemma}{Lemma}
\newtheorem{theorem}{Theorem}
\newtheorem{corollary}{Corollary}
\theoremstyle{definition}
\theoremstyle{remark}
\newtheorem {remark}{Remark}
\DeclareMathOperator{\Aut}{Aut}
\DeclareMathOperator{\Der}{Der}
\def\GG{{\mathbb G}}
\def\KK{{\mathbb K}}
\def\ZZ{{\mathbb Z}}
\def\AA{{\mathbb A}}
\renewcommand{\phi}{\varphi}
\renewcommand{\ge}{\geqslant}
\begin{document}
\date{}
\title[Finiteness results on triangular automorphisms]{Some finiteness results on triangular automorphisms}
\author{Ivan Arzhantsev}
\thanks{The research was supported by Russian Science Foundation, grant 19-11-00172}
\address{National Research University Higher School of Economics, Faculty of Computer Science, Pokrovsky Boulevard 11, Moscow, 109028 Russia}
\email{arjantsev@hse.ru}
\author{Kirill Shakhmatov}
\address{National Research University Higher School of Economics, Faculty of Computer Science, Pokrovsky Boulevard 11, Moscow, 109028 Russia}
\email{bagoga@list.ru}
\subjclass[2010]{Primary 14R10, 14R20; \ Secondary 13A50, 13N15}
\keywords{Affine space, triangular automorphism, algebraic group, regular action.} 

\maketitle
\begin{abstract}
In this note we prove that every finite collection of connected algebraic subgroups of the group of triangular automorphisms of the affine space generates a connected solvable algebraic subgroup.
\end{abstract}

%%%%%%%%%%%%%%%%%%%

\section{Introduction}
\label{sec1}

It is well known that the group $\Aut(\AA^n)$ of polynomial automorphisms of the affine space $\AA^n$ with $n\ge 2$ possesses a structure of an infinite-dimensional algebraic group; see \cite{Sh1,Sh2,FK}. Infinite-dimensionality of the automorphism group has many important consequences both for algebra and geometry. In particular, there are many results on the structure of subgroups of the group 
$\Aut(\AA^n)$, see, e.g. \cite{BNS,BEE,Po-1} and the references therein. Some of these results are similar with the case of (finite-dimensional) algebraic groups, but some of them are quite specific. It is natural to study finite-dimensional subgroups of the group  $\Aut(\AA^n)$. Let us say that a subgroup $G$ in $\Aut(\AA^n)$ is \emph{algebraic} if $G$ has a structure of an algebraic group such that the action $G\times\AA^n\to \AA^n$ is a morphism of algebraic varieties. In recent decades, a number of papers on this subject have naturally raised the following question: when does a finite collection of algebraic subgroups in the automorphism group generate an algebraic subgroup? In this note we address this question under the assumption that the subgroups are triangular. 

Let us recall that the group of polynomial automorphisms of the affine space $\AA^n$ can be identified with the group of automorphims of the polynomial algebra $\KK[x_1,\ldots,x_n]$. An~automorphsim 
$\varphi$ of the algebra $\KK[x_1,\ldots,x_n]$ is uniquely defined by the images $f_i:=\varphi(x_i)$ of the generators. Let us represent an automorphism $\varphi$ by a tuple $(f_1,\ldots,f_n)$ of polynomials. 

A tuple $(f_1,\ldots,f_n)$ of polynomials in $\KK[x_1,\ldots,x_n]$ is \emph{triangular} if we have $f_i=\lambda_ix_i+h_i$, where $\lambda_i$ are nonzero scalars, $h_i\in\KK[x_1,\ldots,x_{i-1}]$ for $i=2,\ldots,n$ and $h_1\in\KK$. An~automorphism $\varphi$ is \emph{triangular} if the corresponding tuple $(f_1,\ldots,f_n)$ is triangular. Clearly, any triangular tuple $(f_1,\ldots,f_n)$ defines an automorphism of the affine space. We say that a~subgroup $G$ of the automorphism group $\Aut(\AA^n)$ is \emph{triangular}, if $G$ consists of triangular automorphisms. 

\smallskip

Let us assume that the ground field $\KK$ is algebraically closed. The main result of this note is the following theorem. 

\begin{theorem} \label{theo}
Every collection $G_1,\ldots,G_s$ of connected triangular algebraic subgroups of the group $\Aut(\AA^n)$ generates a connected solvable algebraic subgroup of $\Aut(\AA^n)$ .
\end{theorem}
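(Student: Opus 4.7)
The plan is to realize the abstract subgroup $G := \langle G_1,\ldots,G_s\rangle$ as a closed connected subgroup of a general linear group $\GL(W)$, acting on a judiciously chosen finite-dimensional subspace $W \subset \KK[x_1,\ldots,x_n]$ stable under every element of $G$. Once this is achieved, the conclusion follows immediately from the classical theorem of Borel--Chevalley that, in an algebraic group, the subgroup generated by finitely many closed connected subgroups is itself closed and connected (see for instance A.~Borel, \emph{Linear Algebraic Groups}, Prop.~I.2.2).

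The crucial first step is a uniform degree bound. For each $G_i$, the fact that $G_i \times \AA^n \to \AA^n$ is a morphism forces the regular map $g \mapsto g(x_j)$ to land in a finite-dimensional subspace of $\KK[x_1,\ldots,x_n]$, so there is a constant $d_i$ with $\deg g(x_j) \le d_i$ for every $g \in G_i$ and every $j$. Setting $\alpha := \max_i d_i$, I would introduce the weighted grading $\deg_w(x_i) := \alpha^{i-1}$ and let $W_D \subset \KK[x_1,\ldots,x_n]$ denote the finite-dimensional subspace of polynomials of weighted degree at most $D$. Using the triangular form $\varphi(x_i) = \lambda_i x_i + h_i$ with $h_i \in \KK[x_1,\ldots,x_{i-1}]$ of ordinary degree at most $\alpha$, a short check gives $\deg_w \varphi(x_i) \le \alpha^{i-1}$ for every $\varphi$ in any $G_i$; since $\varphi$ is an algebra automorphism, it then preserves every $W_D$. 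Taking $W := W_{\alpha^{n-1}}$, which contains $1,x_1,\ldots,x_n$, each $G_i$ yields an algebraic representation $G_i \to \GL(W)$, and the resulting representation of $G$ on $W$ is faithful because any automorphism trivial on $W$ fixes every $x_j$ and is therefore the identity.

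Borel's theorem then realizes $G$ as a closed connected algebraic subgroup of $\GL(W)$. The action on $\AA^n$ is automatically a morphism: the coefficients of each polynomial $g(x_j) \in W$ depend linearly on the matrix entries of $g \in \GL(W)$, so $(g,a) \mapsto (g(x_1)(a),\ldots,g(x_n)(a))$ is regular on $G \times \AA^n$. Solvability is free: the whole group $T_n$ is solvable as an abstract group, as one verifies by induction on $n$ using the extensions $1 \to U_n \to T_n \to \GG_m^n \to 1$ and $1 \to (\KK[x_1,\ldots,x_{n-1}],+) \to U_n \to U_{n-1} \to 1$, where $U_n \subset T_n$ consists of the automorphisms with all $\lambda_i = 1$; a subgroup of a solvable group is solvable.

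The real obstacle lies in the first step: pinning down a single weight $\alpha$ that controls every $G_i$ simultaneously, and thereby producing one finite-dimensional subspace $W$ invariant under the whole family. Once $W$ is secured, Borel's theorem packages the generated subgroup as a closed connected algebraic subgroup of $\GL(W)$, and triangularity delivers solvability at no extra cost.
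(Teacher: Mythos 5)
Your proposal is correct, and it reaches the conclusion by a genuinely different route through the key technical step. The paper first proves (Lemma~\ref{keylemma}) that any product of triangular automorphisms of degree at most $m$ again has degree at most $m^{n-1}$, and then reruns the Borel--Humphreys generation argument by hand inside $\mathcal{T}(m^{n-1})$ (Propositions~\ref{1-prop} and~\ref{2-prop}), where some care is needed because composition only maps $\mathcal{T}(m^{n-1})\times\mathcal{T}(m^{n-1})$ into the larger variety $\mathcal{T}(m^{(n-1)^2})$. You instead use the weighted grading $\deg_w x_i=\alpha^{i-1}$ to produce a single finite-dimensional subspace $W=W_{\alpha^{n-1}}$ invariant under every generator --- the estimate $\sum_k a_k\alpha^{k-1}\le \alpha^{i-2}\sum_k a_k\le\alpha^{i-1}$ for a monomial of ordinary degree at most $\alpha$ in $x_1,\ldots,x_{i-1}$ is exactly right --- hence invariant under the whole generated group; this linearizes the problem at the outset and lets you quote the classical generation theorem in $\GL(W)$ verbatim. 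What you gain is that no ind-variety bookkeeping is needed and the constructibility argument is outsourced entirely to the textbook statement; what you lose is the explicit quantitative content of Lemma~\ref{keylemma} (the uniform bound $m^{n-1}$ on degrees of arbitrary products), which the authors present as the main point of the note. Two small points worth making explicit in a final write-up: each $\varphi$ acts \emph{invertibly} on $W$ because it is injective on the finite-dimensional space it maps into itself, and the regularity of the matrix entries of $g|_W$ as functions on $G_i$ (which is where \cite[Lemma~1.4]{PV} enters, both for the existence of the bound $d_i$ and for the algebraicity of the representation $G_i\to\GL(W)$).
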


This note originates from an attempt to prove \cite[Proposition~3.6]{AZ}. Let $\GG_a$ be the additive group of the ground field or, equivalently, a one-dimensional unipotent algebraic group. 
In order to show that a finite collection of triangular $\GG_a$-subgroups generates a unipotent algebraic subgroup we used in~\cite{AZ} specific arguments including the Baker-Campbell-Hausdorff formula and the multivariate Zassenhaus formula. There is a desire to prove this fact by more direct methods.

\smallskip

In~\cite[Problem~3.1]{Po} it is asked when is the minimal closed subgroup of the group of polynomial automorphisms of the affine space containing two given $\GG_a$-subgroups finite-dimensional. Theorem~\ref{theo} shows that this is the case when the $\GG_a$-subgroups are triangular. 

\begin{corollary}
\label{cor1}
Every collection $U_1,\ldots,U_s$ of triangular $\GG_a$-subgroups generates a unipotent algebraic subgroup of $\Aut(\AA^n)$.
\end{corollary}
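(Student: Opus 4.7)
The plan is to deduce Corollary~\ref{cor1} directly from Theorem~\ref{theo}. Since each $U_i\cong\GG_a$ is connected and triangular, Theorem~\ref{theo} produces a connected solvable algebraic subgroup $G\subseteq\Aut(\AA^n)$ containing $U_1,\ldots,U_s$, so it suffices to prove that this $G$ is unipotent.

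For this, consider the natural homomorphism $\pi$ from the group of triangular automorphisms onto the torus $(\KK^*)^n$ sending $(\lambda_1 x_1+h_1,\ldots,\lambda_n x_n+h_n)$ to $(\lambda_1,\ldots,\lambda_n)$. Any algebraic homomorphism $\GG_a\to\TT$ into a torus is trivial, hence $\pi(U_i)=\{1\}$ for each $i$, and consequently $G\subseteq\Ker\pi$. Thus every $\phi\in G$ has the form $\phi(x_i)=x_i+h_i(x_1,\ldots,x_{i-1})$, with the scalars $\lambda_i$ all equal to $1$.

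The remaining step is to show that any such $\phi$ acts locally unipotently on $\KK[x_1,\ldots,x_n]$. I would proceed by induction on $n$: for $n=1$ the automorphism is a shift $x_1\mapsto x_1+c$, which is unipotent on every subspace of bounded degree; for the inductive step, filter $\KK[x_1,\ldots,x_n]$ by the degree in $x_n$ and observe that on the associated graded $\phi$ restricts to its action on $\KK[x_1,\ldots,x_{n-1}]$ on each summand $x_n^k\cdot\KK[x_1,\ldots,x_{n-1}]$. Since the $G$-action on $\KK[x_1,\ldots,x_n]$ is locally finite, one can choose a finite-dimensional $G$-invariant subspace $V\subseteq\KK[x_1,\ldots,x_n]$ containing $x_1,\ldots,x_n$; the representation $G\to\GL(V)$ is then faithful, and every element acts as a unipotent operator, so $G$ is a unipotent algebraic group.

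The heavy lifting is carried out by Theorem~\ref{theo}; the rest is a standard Jordan-decomposition style argument, so I do not expect any genuine obstacle beyond bookkeeping the filtration in the inductive step.
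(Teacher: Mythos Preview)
Your proof is correct and follows essentially the same route as the paper: invoke Theorem~\ref{theo} to get a connected algebraic group $G$, and then argue that every element of $G$ acts locally unipotently on $\KK[x_1,\ldots,x_n]$ via the unitriangular structure. The paper's version compresses your filtration argument into a single line (ordering monomials so that each $g\in G$ sends a monomial to itself plus earlier terms), and it leaves implicit the step you make explicit through $\pi$, namely that a triangular $\GG_a$-subgroup must land in the unitriangular kernel because $\GG_a$ admits no nontrivial homomorphism to a torus.
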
 
 
One may be interested in a weaker property. Namely, let us say that an element $g$ of the automorphism group $\Aut(X)$ of an affine variety $X$ is \emph{algebraic} if $g$ is contained in an algebraic subgroup of the group $\Aut(X)$. In~\cite{PR}, the authors address the question when every element in a subgroup generated by a family of algebraic subgroups is algebraic. Some results in this direction are obtained in~\cite[Section~3]{PR}. It will be fascinating to study this question for subgroups generated by two $\GG_a$-subgroups in $\Aut(X)$.  

\smallskip

Now we assume that the ground field $\KK$ is an algebraically closed field of characteristic zero.
Let us come to an infinitesimal version of the results discussed above. It is well known that every derivation $D$ of the algebra $\KK[x_1,\ldots,x_n]$ has the form
$$
g_1\frac{\partial}{\partial x_1}+\ldots+g_n\frac{\partial}{\partial x_n}
$$
with some $g_1,\ldots,g_n\in\KK[x_1,\ldots,x_n]$. The vector space $\Der(A)$ of all derivations of an algebra $A$ forms a Lie algebra with respect to the commutator $[D_1,D_2]=D_1D_2-D_2D_1$. It is natural to ask when a set of derivations generates a finite-dimensional Lie algebra and which finite-dimensional Lie algebras appear this way. 

A derivation $D$ of an algebra $A$ is \emph{locally nilpotent} if  for every $a\in A$ is there a positive integer $k$ such that $D^k(a)=0$. It is an important problem to characterize collections of locally nilpotent derivations of $A$ that generate finite-dimensional Lie subalgebras in $\Der(A)$ and to describe finite-dimensional Lie algebras which can be realized this way. There are some results on this problem. For example, in~\cite[Section~5]{ALS} the case of homogeneous locally nilpotent derivations annihilating all fractions of homogeneous elements of the same degree in a graded algebra is studied and a description of finite-dimensional Lie algebras generated by such locally nilpotent derivations is obtained.

\smallskip

We say that a derivation $D$ of the algebra $\KK[x_1,\ldots,x_n]$ is \emph{triangular} if $g_i\in\KK[x_1,\ldots,x_{i-1}]$ for all $i=2,\ldots,n$ and $g_1\in\KK$. Clearly, every triangular derivation $D$ is locally nilpotent. See~\cite{Fr} for more information on locally nilpotent and triangular derivations.

\begin{corollary}
\label{cor2}
Every collection $D_1,\ldots,D_s$ of triangular derivations generates a finite-dimensional nilpotent Lie algebra.
\end{corollary}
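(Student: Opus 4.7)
The plan is to reduce Corollary~\ref{cor2} to Corollary~\ref{cor1} via the exponential map. Since the ground field $\KK$ has characteristic zero and each triangular derivation $D_i$ is locally nilpotent, the series $\exp(tD_i)=\sum_{k\ge 0}\frac{t^k}{k!}D_i^k$ terminates on every polynomial and therefore defines, for each $t\in\KK$, an automorphism of $\KK[x_1,\ldots,x_n]$. The map $t\mapsto \exp(tD_i)$ thus yields a one-parameter $\GG_a$-subgroup $U_i\subset\Aut(\AA^n)$. A straightforward computation using the triangular shape of $D_i$ shows that each $\exp(tD_i)$ is again a triangular automorphism, so every $U_i$ is a triangular $\GG_a$-subgroup.

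By Corollary~\ref{cor1}, the subgroup $U:=\langle U_1,\ldots,U_s\rangle\subset\Aut(\AA^n)$ is unipotent algebraic, hence finite-dimensional, and its Lie algebra $\lie(U)$ is finite-dimensional and nilpotent. The algebraic action of $U$ on $\AA^n$ induces, via differentiation at the identity, a Lie algebra homomorphism
$$
\delta:\lie(U)\longrightarrow\Der(\KK[x_1,\ldots,x_n]),
$$
and the generator of $\lie(U_i)\subset\lie(U)$ corresponds under $\delta$ precisely to $D_i$. In characteristic zero the exponential $\exp:\lie(U)\to U$ is a bijection for the unipotent group $U$, so any $X\in\Ker\delta$ integrates to a trivial one-parameter subgroup of $U$ and therefore vanishes; thus $\delta$ is injective.

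It follows that the Lie subalgebra $\mathfrak{g}\subset\Der(\KK[x_1,\ldots,x_n])$ generated by $D_1,\ldots,D_s$ is contained in the image $\delta(\lie(U))$, which is a finite-dimensional nilpotent Lie algebra; hence $\mathfrak{g}$ is itself finite-dimensional and nilpotent, establishing the corollary. I do not expect a substantial obstacle in executing this plan: the only technical point is the verification that the differential $\delta$ matches the generator of $\lie(U_i)$ with $D_i$, which is the standard correspondence between $\GG_a$-actions in characteristic zero and locally nilpotent derivations, and the verification that $\exp(tD_i)$ is triangular, which reduces to noting that the triangular derivations form a Lie subalgebra of $\Der(\KK[x_1,\ldots,x_n])$ stable under composition with the triangular ideal structure on $\KK[x_1,\ldots,x_n]$.
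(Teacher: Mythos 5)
Your proposal is correct and follows essentially the same route as the paper: exponentiate each triangular derivation to a triangular $\GG_a$-subgroup, invoke Corollary~\ref{cor1} to obtain a unipotent algebraic group, and observe that the $D_i$ lie in its finite-dimensional nilpotent tangent algebra. The extra details you supply (triangularity of $\exp(tD_i)$ and injectivity of the differentiation map) are sound but not a different argument.
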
 

Concerning the proof of Theorem~\ref{theo}, one may observe that the degree of a composition of triangular automorphisms can be strictly higher than the degrees of the factors. For example, the square of the automorphism $(x_1,x_2+x_1^2,x_3+x_2^2)$ of degree 2 is 
$$
(x_1,x_2+2x_1^2,x_3+2x_2^2+2x_1^2x_2+x_1^4),
$$
so it has degree 4.  In Lemma~\ref{keylemma} we show that the degree of a product of triangular automorphisms of degree at most $m$ does not exceed $m^{n-1}$. This result essentially implies 
Theorem~\ref{theo} modulo technical details which are given in the next section. We also show that Theorem~\ref{theo} does not hold for non-connected algebraic subgroups.

\medskip

\emph{Acknowledgments.}\ We thank the referee for useful comments and corrections. Also the first author is grateful to Mikhail Zaidenberg for fruitful discussions and suggestions. 

%%%%%%%%%%%%%%%%%%%

\section{Proofs of the results}
\label{sec2}

We begin with a technical lemma. Let $\KK$ be an arbitrary field. By the degree of a mononial $x_1^{i_1}\ldots x_n^{i_n}$ we mean the sum $i_1+\ldots+i_n$. The degree of a polynomial in $\KK[x_1,\ldots,x_n]$ is the maximal degree of its terms. We denote by $\mathcal{T}(m)$ the set of triangular automorpshisms $(f_1,\ldots,f_n)$ such that the degree of any polynomial $f_i$ is at most $m$. Considering the coefficients of the polynomials $f_1,\ldots,f_n$ as coordinates, we obtain a structure of an affine algebraic variety on the set $\mathcal{T}(m)$ such that the action map $\mathcal{T}(m)\times \AA^n\to\AA^n$ is a morphism. 

\smallskip

Denote by $G(m)$ the subgroup of $\Aut(\AA^n)$ generated by $\mathcal{T}(m)$. Note that the inverse to an element from $\mathcal{T}(m)$ is a product of automorhisms of the form 
$(x_1,\ldots,\lambda_i^{-1}(x_i-h_i),\ldots,x_n)$, where $h_i$ is a monomial in $x_1,\ldots,x_{i-1}$ of degree at most $m$. It shows that any element of $G(m)$ is a product of elements of $\mathcal{T}(m)$. 

\begin{lemma}
\label{keylemma}
The subgroup $G(m)$ is contained in $\mathcal{T}(m^{n-1})$.
\end{lemma}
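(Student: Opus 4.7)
The plan is to prove by induction on the number of factors $k$ that if $\chi = \chi^{(1)} \circ \cdots \circ \chi^{(k)}$ with each $\chi^{(j)} \in \mathcal{T}(m)$, then writing $\chi = (\tilde f_1, \ldots, \tilde f_n)$ one has $\deg \tilde f_i \le m^{i-1}$ for every $i$. Since the excerpt already points out that every element of $G(m)$ is a product of members of $\mathcal{T}(m)$, and since the product of triangular tuples is again triangular (the $x_i$-coefficient of the $i$-th component is the nonzero product of the corresponding coefficients, and no higher-index variable is introduced), this estimate immediately yields $G(m)\subseteq \mathcal{T}(m^{n-1})$.

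The heart of the argument is the substitution formula for composition. If $\chi = (\tilde f_1, \ldots, \tilde f_n)$ is a triangular tuple and $\eta \in \mathcal{T}(m)$ is given by $\eta(x_i) = \mu_i x_i + h_i(x_1,\ldots,x_{i-1})$ with $\deg h_i \le m$, then the $i$-th component of $\chi\circ\eta$ (first $\eta$, then $\chi$) is
\[
(\chi\circ\eta)(x_i) \;=\; \mu_i \tilde f_i + h_i(\tilde f_1, \ldots, \tilde f_{i-1}).
\]
Here the crucial point is triangularity: only $\tilde f_1, \ldots, \tilde f_{i-1}$ enter the nonlinear term. I will establish this formula as a one-line preliminary.

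Next I will run the induction on $k$. For $k=1$ we have $\chi \in \mathcal{T}(m)$, so $\deg \tilde f_i \le m \le m^{i-1}$ for $i \ge 2$, while $\tilde f_1$ is affine of degree $1 = m^0$. For the inductive step, assume the bound for $\chi$ a product of $k$ factors. Replacing $\chi$ by $\chi\circ\eta$ with $\eta\in\mathcal{T}(m)$ and applying the substitution formula above, the term $\mu_i \tilde f_i$ contributes degree at most $m^{i-1}$, while a monomial of $h_i$ of total degree $d\le m$ in $x_1,\ldots,x_{i-1}$ produces a polynomial of degree at most $d \cdot \max_{j<i}\deg \tilde f_j \le m\cdot m^{i-2} = m^{i-1}$. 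Taking the maximum, $\deg (\chi\circ\eta)(x_i)\le m^{i-1}$, which closes the induction.

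I do not expect a genuine obstacle here; the one thing to be careful about is the bookkeeping of the composition direction (which tuple is substituted into which) and verifying that the same estimate survives when one multiplies on the other side, which is handled by the same substitution identity with the roles of $\chi$ and $\eta$ swapped (now the triangular structure of $\chi$ ensures that its $i$-th component only involves $x_1,\ldots,x_i$, so substituting triangular data of degree at most $m$ bounds the result by $m\cdot m^{i-1}$ only if one is careless; a second, symmetric, induction shows that the $m^{i-1}$ bound is in fact preserved under left multiplication as well). This minor verification is the only technical step; no new ideas beyond the formula and the straightforward induction are needed.
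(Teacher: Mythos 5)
Your proof is correct and follows essentially the same route as the paper: the same substitution formula for composing with one more factor from $\mathcal{T}(m)$ and the same estimate $m\cdot m^{i-2}=m^{i-1}$, the only difference being that you run a single induction on the number of factors with the bound for all coordinates simultaneously, whereas the paper nests an induction on the coordinate index $i$ around one on word length. The worry in your final paragraph is superfluous: since every element of $G(m)$ is a finite product of elements of $\mathcal{T}(m)$, the one-sided induction (always appending the new factor on the side where the substitution formula applies) already reaches all of $G(m)$, so no ``second, symmetric induction'' is needed --- and a direct one-step estimate on the other side would only give $m^{i}$, so it is fortunate that it is not required.
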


\begin{proof}
Let $x_{(0)}=\emptyset$ and $x_{(i)}$ be the tuple $(x_1,\ldots,x_i)$. We consider $h=(h_1,\ldots,h_n)\in G(m)$ with $h_i=\mu_ix_i+p_i$, where $\mu_i$ are nonzero scalars, $p_i\in\KK[x_1,\ldots,x_{i-1}]$ for $i=2,\ldots,n$ and $p_1\in\KK$. We define $hx_{(i)}$ as the tuple $(h_1,\ldots,h_i)$. Taking another tuple $h'=(h_1',\ldots,h_n')$ of the same form, we have
$$
h'hx_{(i)}=(\mu_j'\mu_jx_j+\mu_j'p_j(x_{(j-1)})+p_j'(hx_{(j-1)}))_{j=1}^i.
$$

Let us take any automorphism $(f_1,\ldots,f_n)$ from the group $G(m)$. We are going to prove that the degree of $f_i$ is at most $m^{i-1}$ for all $i=1,\ldots,n$. We proceed by induction on $i$. For $i=1$ we have
$f_1=\lambda x_1+c$, where $\lambda\in\KK\setminus\{0\}$ and $c\in\KK$, so the assertion holds. 

Assume it holds for $i-1$. We prove the assertion for $i$. It suffices to check that if it holds for $h(f_1,\ldots,f_n)$ for all $h=(\mu_jx_j+p_j(x_{(j-1)}))_{j=1}^n\in\mathcal{T}(m)$. 
Let $f_i=\lambda_ix_i+g_i$, where $\lambda_i\in\KK\setminus\{0\}$ and $g_i\in\KK[x_1,\ldots,x_{i-1}]$. We have
$$
(hf)_i=\mu_i\lambda_ix_i+\mu_ig_i(x_{(i-1)})+p_i(gx_{(i-1)}).
$$
By the inductive hypothesis, the degree of $p_i(gx_{(i-1)})$ does not exceed $m\cdot m^{i-2}=m^{i-1}$. The case $i=n$ completes the proof of the lemma. 
\end{proof}

\begin{remark}
While this paper was under review, we have found a variant of Lemma~\ref{keylemma} in \cite[Proposition~15.2.5]{FK}. 
\end{remark} 

From now on we assume that the ground field $\KK$  is algebraically closed. The following proposition is a version of \cite[Proposition~7.5]{Hum}. For convenience of the reader we provide it with a complete proof. 

\begin{proposition}
\label{1-prop}
For any positive integer $m$ the group $G(m)$ is a connected algebraic subgroup of $\Aut(\AA^n)$. Moreover, there is a positive integer $s$ such that every element of $G(m)$ has the form
$g_1\cdot\ldots\cdot g_s$ with some $g_i\in\mathcal{T}(m)$.
\end{proposition}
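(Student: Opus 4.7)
The plan is to adapt the classical argument of \cite[Proposition~7.5]{Hum} while working inside the finite-dimensional variety $\mathcal{T}(m^{n-1})$ provided by Lemma~\ref{keylemma}.

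First I would set up the ambient geometry. For every positive integer $M$ the set $\mathcal{T}(M)$ is naturally an irreducible quasi-affine variety, namely the open subset of the affine space of triangular tuples of degree at most $M$ where the diagonal coefficients $\lambda_i$ are nonzero. Composition of tuples is algebraic in the coefficients, and Lemma~\ref{keylemma} ensures that for every $s\ge 1$ the $s$-fold composition restricts to a morphism $\mu_s\colon \mathcal{T}(m)^s \to \mathcal{T}(m^{n-1})$. Put $Y_s:=\mu_s(\mathcal{T}(m)^s)$. By Chevalley's theorem $Y_s$ is constructible, and it is irreducible as the image of an irreducible variety. Since $\mathrm{id}\in\mathcal{T}(m)$, the chain $Y_1\subseteq Y_2\subseteq\ldots$ is increasing, and so is the chain of Zariski closures $\overline{Y_s}\subseteq\mathcal{T}(m^{n-1})$. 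By finite-dimensionality this chain stabilizes at some index $s_0$; set $W:=\overline{Y_{s_0}}$.

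The main step is to show that $W$ is a closed algebraic subgroup. Composition defines a morphism $\mu\colon \mathcal{T}(m^{n-1})\times\mathcal{T}(m^{n-1})\to \mathcal{T}(m^{2(n-1)})$, and $\mathcal{T}(m^{n-1})$ sits as a closed subvariety of $\mathcal{T}(m^{2(n-1)})$ (the higher-degree coefficients vanish). Consequently $W$ is closed in $\mathcal{T}(m^{2(n-1)})$, so $\mu^{-1}(W)$ is closed. Since $\mu(Y_{s_0}\times Y_{s_0})\subseteq Y_{2s_0}\subseteq W$, the closed set $\mu^{-1}(W)$ contains $Y_{s_0}\times Y_{s_0}$, hence its closure $\overline{Y_{s_0}}\times\overline{Y_{s_0}}=W\times W$. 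This gives $W\cdot W\subseteq W$. For inversion, the standard recursion $e_i=\lambda_i^{-1}(x_i-h_i(e_1,\ldots,e_{i-1}))$ is algebraic in the coefficients, giving a morphism $\iota\colon \mathcal{T}(m^{n-1})\to \mathcal{T}((m^{n-1})^{n-1})$. Since $G(m)$ is a subgroup of $\Aut(\AA^n)$ contained in $\mathcal{T}(m^{n-1})$ by Lemma~\ref{keylemma}, we have $\iota(Y_{s_0})\subseteq G(m)\subseteq W$, and the same closure argument yields $\iota(W)\subseteq W$.

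Finally, $Y_{s_0}$ is constructible with irreducible closure $W$, so it contains a dense open subset $U$ of $W$. Now that $W$ is a genuine algebraic group, the density argument of \cite[Lemma~7.4]{Hum} gives $U\cdot U=W$. Therefore $W\subseteq Y_{s_0}\cdot Y_{s_0}\subseteq Y_{2s_0}\subseteq W$, forcing $W=Y_{2s_0}$; taking $s:=2s_0$ establishes the second assertion. Moreover $G(m)=\bigcup_k Y_k\subseteq W=Y_{2s_0}\subseteq G(m)$, so $G(m)=W$, which is irreducible and hence a connected algebraic subgroup of $\Aut(\AA^n)$. The principal technical hurdle is the step showing that $W$ is closed under composition: the ambient $\mathcal{T}(m^{n-1})$ is not itself closed under the group operation, so one must pass to the larger variety $\mathcal{T}(m^{2(n-1)})$ and exploit the fact that $\mathcal{T}(m^{n-1})$ embeds there as a closed subvariety in order to convert the set-theoretic containments provided by Lemma~\ref{keylemma} into Zariski-closed containments suitable for the continuity argument.
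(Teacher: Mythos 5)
Your argument is correct and follows essentially the same route as the paper's proof: the increasing chain of constructible images $Y_l$ inside $\mathcal{T}(m^{n-1})$, stabilization of their closures by finite-dimensionality, closedness of $W=\overline{Y_{s_0}}$ under the group operations, and the density argument yielding $W=Y_{2s_0}=G(m)$. The only cosmetic differences are that you obtain $W\cdot W\subseteq W$ by pulling back the closed set $W$ along the composition morphism and using $\overline{A\times B}=\overline{A}\times\overline{B}$, where the paper fixes one factor at a time, and that you verify closure under inversion explicitly rather than relying on the earlier observation that inverses of elements of $\mathcal{T}(m)$ are themselves products of elements of $\mathcal{T}(m)$.
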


\begin{proof}
By Lemma~\ref{keylemma}, the subgroup $G(m)$ is contained in the affine variety $X:=\mathcal{T}(m^{n-1})$. We denote by $Y_l$ the image of the morphism 
$$
\mathcal{T}(m)^l\to X,\quad (g_1,\ldots,g_l)\mapsto g_1\cdot\ldots\cdot g_l.
$$
Consider the closure $\overline{Y_l}$ of $Y_l$ in $X$. This is an irreducible closed subvariety in $X$ and $\overline{Y_l}\subseteq\overline{Y_{l+1}}$ for any $l$. Then there is a positive integer $a$ such that
$\overline{Y_a}=\overline{Y_{a+1}}=\ldots$. 

We claim that $\overline{Y_b}\cdot\overline{Y_c}$ is contained in $\overline{Y_{b+c}}$ for all positive integers $b$ and $c$. Indeed, let $x_0\in Y_c$ and consider the map $Y_b\to Y_{b+c}$ given by $y\mapsto y\cdot x_0$. By Lemma~\ref{keylemma}, this map extends to a morphism $\mathcal{T}(m^{n-1})\to \mathcal{T}(m^{(n-1)^2})$. Restricting it to $\overline{Y_b}$, we obtain the morphism
$\overline{Y_b}\to\overline{Y_{b+c}}$. So we have $\overline{Y_b}\cdot Y_c\subseteq\overline{Y_{b+c}}$. 

Let $y_0\in\overline{Y_b}$ and consider the map $Y_c\to\overline{Y_{b+c}}$ given by $x\mapsto y_0\cdot x$. By the arguments given above this map extends to the morphism $\overline{Y_c}\to\overline{Y_{b+c}}$. So we obtain $\overline{Y_b}\cdot \overline{Y_c}\subseteq \overline{Y_{b+c}}$. 

In particular, the subset $\overline{Y_a}$ contains the subgroup $G(m)$ and $\overline{Y_a}\cdot\overline{Y_a}\subseteq \overline{Y_{2a}}=\overline{Y_a}$. Since $Y_a$ is a constructible subset, it contains a dense open subset $U\subseteq\overline{Y_a}$. We claim that the subset $U\cdot U$ coincides with $\overline{Y_a}$. Indeed, the image $U^{-1}$ of the subset $U$ under the automorphism of taking the inverse element is also an open subset of $\overline{Y_a}$, and for any element $g\in\overline{Y_a}$ the intersection of subsets $U$ and $g\cdot U^{-1}$ is nonempty. It shows that $g=h_1\cdot h_2$ with some $h_1,h_2\in U$. 

We conclude that the subgroup $G(m)$ coincides with $\overline{Y_a}=Y_a\cdot Y_a$ and the second assertion of the proposition holds with $s=2a$.
\end{proof} 

\begin{proposition}
\label{2-prop}
Let $m$ be a positive integer and $f_i\colon X_i\to\mathcal{T}(m)$ with $i\in\mathcal{I}$ be morphisms from irreducible algebraic varieties $X_i$. Assume that every image $Y_i:=f_i(X_i)$  contains the unit element of $\mathcal{T}(m)$. Then the subsets $\{Y_i, i\in\mathcal{I}\}$ generate in $\Aut(\AA^n)$ a connected algebraic subgroup $G_{\mathcal{I}}$. Moreover, there is a finite sequence $I=(i_1,\ldots,i_k)$ of indices in $\mathcal{I}$ such that $G_{\mathcal{I}}=Y_{i_1}^{\epsilon_1}\cdot\ldots\cdot Y_{i_k}^{\epsilon_k}$ with $\epsilon_j=\pm 1$. 
\end{proposition}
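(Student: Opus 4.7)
My plan is to extend the closure-of-semigroup argument from Proposition~\ref{1-prop} by keeping track of signs. Lemma~\ref{keylemma}, together with the explicit polynomial formulas for composition and for inversion of triangular automorphisms (the latter has only the nonzero scalars $\lambda_i$ in the denominators), implies that for every finite sequence $I=(i_1,\ldots,i_k)$ of indices in $\mathcal{I}$ and every sign vector $\epsilon=(\epsilon_1,\ldots,\epsilon_k)\in\{\pm 1\}^k$ the assignment
$$
(a_1,\ldots,a_k)\mapsto f_{i_1}(a_1)^{\epsilon_1}\cdots f_{i_k}(a_k)^{\epsilon_k}
$$
defines a morphism from the irreducible variety $X_{i_1}\times\cdots\times X_{i_k}$ into the fixed affine variety $\mathcal{T}(m^{n-1})$. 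Denote its image by $Y_{I,\epsilon}$; then $\overline{Y_{I,\epsilon}}$ is an irreducible closed subset of $\mathcal{T}(m^{n-1})$ containing the identity $e$, because each factor $Y_{i_j}$ does.

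By Noetherianity of $\mathcal{T}(m^{n-1})$, pick $Z=\overline{Y_{J,\eta}}$ of maximal dimension. For any other sequence $(I',\epsilon')$, concatenation gives $Y_{(J,I'),(\eta,\epsilon')}\supseteq Y_{J,\eta}$ (using $e\in Y_{I',\epsilon'}$), so $\overline{Y_{(J,I'),(\eta,\epsilon')}}$ is an irreducible closed set containing $Z$ and thus equals $Z$ by maximality. Mimicking the proof of Proposition~\ref{1-prop}---first fix $y\in Y_{I',\epsilon'}$ and extend right multiplication by $y$ to a morphism of $\mathcal{T}(m^{n-1})$ to obtain $Z\cdot y\subseteq Z$; then fix $z\in Z$ and extend left multiplication by $z$ similarly to obtain $z\cdot\overline{Y_{I',\epsilon'}}\subseteq Z$---this upgrades to $Z\cdot\overline{Y_{I',\epsilon'}}\subseteq Z$. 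Reversing $J$ and negating $\eta$ shows that $Z^{-1}$ is itself of the form $\overline{Y_{I'',\epsilon''}}$; applying the previous step with this choice yields $Z\cdot Z^{-1}\subseteq Z$, which together with $e\in Z$ forces $Z^{-1}\subseteq Z$, hence $Z=Z^{-1}$. Thus $Z$ is a closed, irreducible (hence connected), algebraic subgroup of $\Aut(\AA^n)$ containing every $Y_i$ (take $(I',\epsilon')=((i),(+1))$ and use $e\in Z$), so $Z=G_{\mathcal{I}}$.

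It remains to eliminate the closure. The constructible set $Y_{J,\eta}$ contains an open dense subset $U$ of $Z$; since $Z$ is an irreducible algebraic group, for each $z\in Z$ the non-empty open sets $U$ and $z U^{-1}$ meet in $Z$, giving $z\in U\cdot U\subseteq Y_{J,\eta}\cdot Y_{J,\eta}=Y_{(J,J),(\eta,\eta)}$. Hence $G_{\mathcal{I}}=Y_{(J,J),(\eta,\eta)}$, which is the required finite product presentation with index sequence $(J,J)$ and signs $(\eta,\eta)$.

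The principal obstacle is the infinite-dimensionality of $\Aut(\AA^n)$: without a uniform degree bound one cannot take closures or invoke morphisms between ambient varieties. Lemma~\ref{keylemma}, combined with the polynomial/rational formulas for composition and inversion, confines all constructions inside the single finite-dimensional variety $\mathcal{T}(m^{n-1})$; with this in place the rest is a standard closure-of-semigroup argument in the style of \cite[Proposition~7.5]{Hum}, with the signs $\epsilon$ carried throughout to accommodate inverses.
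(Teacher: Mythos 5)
Your argument is correct; the difference from the paper is one of packaging rather than substance. The paper disposes of this proposition in two lines: since $\mathcal{T}(m)$ is a closed subset of the connected algebraic group $G(m)$ constructed in Proposition~\ref{1-prop}, the $f_i$ become morphisms into a genuine (finite-dimensional) algebraic group and the classical statement \cite[Proposition~7.5]{Hum} applies verbatim. You instead bypass Proposition~\ref{1-prop} and re-derive that classical statement directly inside $\mathcal{T}(m^{n-1})$, carrying the sign vectors $\epsilon$ to accommodate inverses; this is essentially Humphreys' own proof, and it is the same closure-of-constructible-sets argument the paper already runs once in Proposition~\ref{1-prop} (which your proof in fact recovers as the special case of a single index with $X=\mathcal{T}(m)$ and $f=\mathrm{id}$). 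Both routes rest on the identical key input: Lemma~\ref{keylemma}, together with the elementary factorization of inverses of elements of $\mathcal{T}(m)$, confines all words in the $Y_i^{\pm1}$ to the fixed affine variety $\mathcal{T}(m^{n-1})$, so that dimensions are bounded and Chevalley's theorem is available. Your version is self-contained at the cost of length; the paper's is shorter at the cost of an external citation. Two cosmetic points: the multiplication maps you ``extend'' actually land in the larger variety $\mathcal{T}(m^{(n-1)^2})$ rather than in $\mathcal{T}(m^{n-1})$ itself, which is harmless because $\mathcal{T}(m^{n-1})$ is closed there (the paper phrases the corresponding step of Proposition~\ref{1-prop} the same way); and the identity $Z^{-1}=\overline{Y_{J^{\mathrm{rev}},-\eta^{\mathrm{rev}}}}$ is most cleanly justified by applying continuity of inversion twice, though for your purposes the one-sided inclusion $Z^{-1}\subseteq\overline{Y_{J^{\mathrm{rev}},-\eta^{\mathrm{rev}}}}$ already suffices to get $Z\cdot Z^{-1}\subseteq Z$.
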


\begin{proof}
Since $\mathcal{T}(m)$ is contained in the algebraic subgroup $G(m)$ as a closed subset, we can assume that the morphisms $f_i$ map $X_i$ to $G(m)$. Now the asserts ion follows from \cite[Proposition~7.5]{Hum}.
\end{proof} 

\begin{proof}[Proof of Theorem~\ref{theo}]
We take a subgroup $G_j$ from the collection $G_1,\ldots,G_s$. By \cite[Lemma~1.4]{PV}, any variable $x_i$ is contained in a finite-dimensional subspace in $\KK[x_1,\ldots,x_n]$ that is invariant under the action of $G_j$. This implies that $G_j$ is contained in $\mathcal{T}(m)$ for some positive integer $m$. The orbit map that applies an element of $G_j$ to the tuple $(x_1,\ldots,x_n)$ defines a morphism $f_j\colon G_j\to\mathcal{T}(m)$. Let us take the maximal value of $m$ over all subgroups $G_1,\ldots,G_s$. Then it follows from Proposition~\ref{2-prop} that the subgroups $G_1,\ldots,G_s$ generate a connected algebraic subgroup.

Finally, we observe that the group of all triangular automorphisms is solvable, see e.g. \cite[Theorem~2, 2)]{BNS}~\footnote{The results of~\cite{BNS} are obtained under the assumption that the ground field has characteristic zero. But it is clear that over any field the first derived subgroup of the group of triangular automorphisms is contained in the group of unitriangular automorphisms, while the $(s+1)$th derived 
subgroup fixes the variables $x_1,\ldots, x_s$; see \cite{BNS} for details. In particular, the $(n+1)$th derived subgroup is trivial.}. This implies that any group of triangular automorphisms is solvable as well.
\end{proof}

\begin{proof}[Proof of Corollary~\ref{cor1}]
By Theorem~\ref{theo}, the subgroups $U_1,\ldots,U_s$ generate an algebraic group~$G$. For any eigenvector $F\in\KK[x_1,\ldots,x_n]$ of an operator $g\in G$ we can order monomials in
such a way that $g$ sends any term in $F$ to this term plus a linear combination of earlier monomials. It shows  that the eigenvalue of $F$ equals $1$. So all elements of $G$ are unipotent and,
by definition, $G$ is a unipotent group. 
\end{proof}

\begin{proof}[Proof of Corollary~\ref{cor2}]
 Let $U_i$ be the triangular $\GG_a$-subgroup $\{\exp(sD_i); s\in\KK\}$.  By Corollary~\ref{cor1}, the subgroups $U_1,\ldots,U_s$ generate a unipotent algebraic group $G$. The derivations $D_1,\ldots,D_s$ lie in the tangent algebra of the group $G$, so they generate a finite-dimensional nilpotent Lie algebra. 
\end{proof}

\begin{remark}
Assume the ground field $\KK$ has characteristic zero. We show that Theorem~\ref{theo} does not hold if the subgroups $G_1,\ldots,G_s$ are non-connected. Indeed, consider two triangular matrices
$$A=
\begin{pmatrix}
1 & a \\
0 & -1
\end{pmatrix} 
\quad
\text{and}
\quad B=
\begin{pmatrix}
1 & b \\
0 & -1
\end{pmatrix} 
$$
with $a\ne b$. Each of these matrices generates a subgroup of order two. Let $G$ be the subgroup generated by $A$ and $B$. The set of matrices in $G$ with determinant $1$ is 
$$
 \left\{\begin{pmatrix}
1 & k(a-b) \\
0 & 1
\end{pmatrix}; k\in\ZZ\right\}.
$$ 
This is an infinite proper subgroup of the connected one-dimensional group 
$\left\{\begin{pmatrix}
1 & c \\
0 & 1
\end{pmatrix}; c\in\KK\right\}.$ This proves that $G$ is not an algebraic group. 
\end{remark}

%%%%%%%%%%%%%%%%%%%%%%%%%%%%%%%%%%%%%%%%%%%%%%

\end{document}